\newtheorem{theoremalph}{Theorem}
\newtheorem{Proposition}[theoremalph]{Proposition}
\newtheorem*{Theorem A}{Theorem A}
\newtheorem{Lemma}{Lemma}[section]
\newtheorem{Remark}[Lemma]{Remark}
\newtheorem*{Acknowledgments}{Acknowledgments}
\begin{document}

\title{Expansive homoclinic classes}
\author{Dawei Yang and Shaobo Gan\\[1mm]
LMAM, School of Mathematical Sciences, \\
Peking University, Beijing 100871, P. R. China\\
{\tt yangdw@math.pku.edu.cn, gansb@math.pku.edu.cn}}

\maketitle

\begin{abstract}
We prove that for $C^1$ generic diffeomorphisms, every expansive
homoclinic class is hyperbolic.

Keywords: expansive, homoclinic class, hyperbolic.

MSC2000: 37C20, 37C29, 37D05.
\end{abstract}

\section{Introduction}

Let $M$ be a $d$-dimensional boundaryless Riemannian manifold.
Denote by ${\rm Diff}^1(M)$ the space of $C^1$ diffeomorphisms on
$M$, endowed with the usual $C^1$ topology. For $f\in{\rm
Diff}^1(M)$, a compact invariant set $\Lambda$ of $f$ is called
\emph{hyperbolic}, if there is a continuous invariant splitting
$T_\Lambda M=E^s\oplus E^u$ on $\Lambda$, and two constants $C\ge
1$, $\lambda\in(0,1)$, such that for any $x\in\Lambda$ and any
$n\in\mathbb{N}$, we have
$$\|Df^n|_{E^s(x)}\|\le C\lambda^n,\qquad
\|Df^{-n}|_{E^u(x)}\|\le C\lambda^n.$$ A periodic point $p$ of $f$
is {\emph hyperbolic} if the orbit ${\rm Orb}(p)={\rm Orb}_f(p)$ of
$p$ is a hyperbolic set. Denote by $\pi(p)$ the period of $p$.

A compact invariant set $\Lambda$ of $f\in{\rm Diff}^1(M)$ is called
\emph{expansive}, if there is $\alpha>0$, such that for any
$x,y\in\Lambda$, if $d(f^n(x),f^n(y))<\alpha$ for any
$n\in\mathbb{Z}$, then $x=y$. It is well-known that every hyperbolic
set is expansive.

A subset ${\cal R}\subset\rm{Diff}^1(M)$ is called \emph{residual},
if it contains a countable intersection of open and dense subsets of
$\rm{Diff}^1(M)$. A property is called ($C^1$) \emph{generic}, if it
holds in a residual subset of $\rm{Diff}^1(M)$. We use the
terminology ``for $C^1$ generic $f$'' to express ``there is a
residual subset ${\cal R}\subset{\rm Diff}^1(M)$ and $f\in{\cal
R}$''.

Given $f\in{\rm Diff}^1(M)$, two hyperbolic periodic points $p$ and
$q$ of $f$ are called \emph{homoclinically related} if $W^s({\rm
Orb}(p))\pitchfork W^u({\rm Orb}(q))\neq\emptyset$ and $W^u({\rm
Orb}(p))\pitchfork W^s({\rm Orb}(q))\neq\emptyset$, denoted as
$p\sim_f q$, or simply $p\sim q$. If $p$ and $q$ are homoclinically
related, their stable manifolds must have the same dimension. The
homoclinic class of $p$ is defined as $H(p)=\overline{\{q:~q\sim
p\}}$, which is a transitive compact invariant set of $f$. The study
of homoclinic classes is an important topic in smooth dynamical
systems, for instance,
\begin{itemize}
\item If $f$ is Axiom A, Smale's spectral
decomposition theorem says that the non-wandering set can be
decomposed into finitely many basic sets, and each basic set is a
homoclinic class.
\item The chain recurrent set can be divided into (maybe infinitely
many) chain recurrent classes. It was proved (\cite{BoC04}) that for
$C^1$ generic $f$, a chain recurrent class containing a periodic
point $p$ is the homoclinic class $H(p)$.
\end{itemize}

The following proposition is our main technical result.
\begin{Proposition}\label{Prop:noweak}
For $C^1$ generic $f$, if a homoclinic class $H(p)$ is expansive,
then there are three constants $\iota\in\mathbb{N}$, $K\ge 1$, and
$\lambda\in(0,1)$ such that for any periodic point $q\sim_f p$ with
period $\pi(q)>\iota$, and for any $x\in {\rm Orb}_f(q)$, we have
\begin{eqnarray*}
\prod_{j=0}^{[\pi(q)/\iota]-1}\|Df^{\iota}|_{E^s(f^{j\iota}(x))}\|
\le K\lambda^{[\pi(q)/\iota]}, \\
\prod_{j=0}^{[\pi(q)/\iota]-1}\|Df^{-\iota}|_{E^u(f^{j\iota}(x))}\|
\le K\lambda^{[\pi(q)/\iota]},\\[.1cm]
\|Df^\iota|_{E^s(x)}\|\cdot\|Df^{-\iota}|_{E^u(f^{\iota}(x))}\|\le\lambda.
\end{eqnarray*}
\end{Proposition}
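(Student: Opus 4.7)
The plan is to argue by contradiction, in the spirit of Ma\~n\'e's proof of the $C^1$-stability conjecture. Suppose $f$ lies in a suitable residual set ${\cal R}$ capturing the standard generic properties (density of hyperbolic periodic orbits in $H(p)$, lower-semicontinuity of $f\mapsto H(p_f)$, validity of the connecting lemma and of Gourmelon's refinement of Franks' lemma, etc.), that $H(p)$ is expansive with constant $\alpha>0$, and that the conclusion fails. Then for arbitrarily large $\iota,K$ and arbitrarily small $1-\lambda$, one of the three inequalities is violated on the orbit of some periodic $q\sim_f p$ of period $>\iota$. Passing to a subsequence one is in exactly one of three scenarios: (i) the stable product is not uniformly contracting along some ${\rm Orb}(q_n)$, (ii) the unstable product is not uniformly expanding, or (iii) the one-step domination inequality fails.

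In each scenario I would first apply the Pliss lemma along ${\rm Orb}_f(q_n)$ to select points at which the relevant geometric mean is close to the failing threshold on a definite proportion of the orbit. Then I would invoke Gourmelon's version of Franks' lemma, which allows a $C^1$-small perturbation of $Df$ supported in a prescribed small neighbourhood of ${\rm Orb}(q_n)$ while preserving any specified finite family of transverse homoclinic intersections. The perturbation is designed so that: in cases (i) and (ii), a weak stable (resp.\ unstable) eigenvalue of the $\pi(q_n)$-return map is moved onto the unit circle; in case (iii), a pair of eigenvalues straddling the splitting $E^s\oplus E^u$ is made to have equal modulus. Because the transverse intersections between $W^s({\rm Orb}(q_n))$ and $W^u({\rm Orb}(p))$ (and vice versa) are preserved, the perturbed periodic orbit remains homoclinically related to (the continuation of) $p$, hence sits in the continuation of $H(p)$.

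A second arbitrarily small perturbation, again via Franks' lemma, then puts the indifferent derivative into a normal form: a rigid rotation on a complex eigenline in cases (i)--(ii), or a two-dimensional homothety on the center subspace in case (iii). A classical local argument around such a normal-form periodic orbit produces a small invariant arc or circle on which the dynamics are equicontinuous, so that two distinct points stay within distance $\alpha$ under all iterates. Approximating these perturbations from inside ${\cal R}$ and using the semicontinuity of $H(p)$ together with the invariance of expansiveness under uniform approximation on compact invariant sets, the small arc is placed inside $H(p_g)$ for a generic $g$ arbitrarily $C^1$-close to $f$, contradicting expansiveness of $H(p)$ for $f$.

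The main obstacle, and the reason the statement is phrased for an iterate $\iota$ rather than for $f$, is case (iii): the mere existence of eigenvalues of comparable modulus does not immediately yield a non-expansive invariant object, and one must choose $\iota$ large enough that the required perturbation of $Df^\iota$ realises a genuine two-dimensional rotation in the center plane while remaining $C^1$-small for $f$ itself. This is the Bonatti--D\'\i az--Pujals mechanism for producing non-hyperbolic behaviour from a breakdown of domination. Carrying this perturbation out while simultaneously preserving the homoclinic relation with $p$, and then transferring the resulting non-expansive invariant object back into $H(p)$ via the generic continuity properties, is the most delicate technical point of the proof.
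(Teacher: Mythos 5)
Your overall strategy --- weak or indifferent eigenvalues along periodic orbits homoclinically related to $p$, created by Franks-type perturbations that preserve the homoclinic relation, lead to a non-expansive invariant object and hence a contradiction --- is the same mechanism the paper uses, and your reduction of the three estimates to the existence of such eigenvalues is essentially the contrapositive of Ma\~n\'e's Lemma II.3 on uniformly hyperbolic families of periodic linear systems, which the paper simply quotes rather than re-derives via Pliss selection. The genuine gap is in your last step. All of your perturbative work produces a non-expansive invariant arc or circle only for diffeomorphisms $g$ arbitrarily $C^1$-close to $f$, and you then claim a contradiction with expansiveness of $H(p)$ for $f$ by appealing to semicontinuity of $g\mapsto H(p_g)$ and ``invariance of expansiveness under uniform approximation.'' That implication goes the wrong way: expansiveness is not a robust property, and the failure of expansiveness of $H(p_g)$ for nearby $g$ (even generic $g$) does not contradict expansiveness of $H(p)$ for $f$. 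You must bring the offending object back to $f$ itself.

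This transfer is exactly the paper's main technical contribution (its Lemma on new generic properties): for $C^1$ generic $f$, if every neighborhood of $f$ contains some $g$ possessing an $\eta$-simply periodic curve whose endpoints are homoclinically related to $p_g$, then $f$ itself has a $2\eta$-simply periodic curve attached to $p$; the two hyperbolic periodic endpoints lie in $H(p)$ and all their iterates stay within $2\eta<\alpha$, which is the contradiction. The objects that make this transfer possible are \emph{normally hyperbolic} arcs with \emph{hyperbolic periodic endpoints}, precisely because such objects admit continuations under perturbation and the relation with $p$ is open; this also forces the preliminary reduction (via a further generic lemma) to weak eigenvalues that are \emph{real}, so that the Sambarino--Vieitez construction yields an arc rather than a rotation. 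Your normal forms (rigid rotations on a complex eigenline, two-dimensional homotheties) are not persistent objects, have no continuation, and need not meet $H(p)$ at more than one point, so even a correct Baire-category transfer argument would not apply to them. To repair the proof you would need to (a) reduce to real weak eigenvalues, (b) replace the rotation/homothety endgame by the construction of a normally hyperbolic periodic arc with hyperbolic endpoints homoclinically related to $p_g$, and (c) prove a genericity statement letting you conclude that $f$ itself, not merely nearby $g$, carries such an arc.
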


The relation of expansiveness and hyperbolicity of homoclinic
classes has been discussed in \cite{PPSV08, PPV05, SaV06, SaV08}.
And it is essentially proved in \cite{SaV08} that under the
assumptions and conclusions of Proposition \ref{Prop:noweak}, the
homoclinic class $H(p)$ is hyperbolic. So, we get

\begin{theoremalph}
For $C^1$ generic $f$, every expansive homoclinic class of $f$ is
hyperbolic.
\end{theoremalph}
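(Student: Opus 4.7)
The plan is to treat Proposition \ref{Prop:noweak} as the main technical input and to deduce Theorem A by an application of the results of Sambarino and Vieitez \cite{SaV08}.

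First, I would fix $f$ in the residual set provided by Proposition \ref{Prop:noweak} and assume that $H(p)$ is expansive. The proposition then furnishes constants $\iota\in\mathbb{N}$, $K\ge 1$, $\lambda\in(0,1)$ such that every periodic point $q\sim_f p$ of period larger than $\iota$ satisfies, along its orbit, uniform geometric contraction of $Df^{\iota}$ along $E^s$, uniform geometric expansion of $Df^{-\iota}$ along $E^u$, and a one-step $\iota$-domination between the two bundles. These are precisely the quantitative ingredients one wants at the periodic points that are dense in $H(p)$.

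Next, since for $C^1$ generic $f$ the homoclinic class $H(p)$ coincides with the closure of the set of periodic points homoclinically related to $p$, the uniform $\iota$-step domination estimate extends by compactness and continuity of the derivative cocycle to a dominated splitting $E\oplus F$ on the whole of $H(p)$, with $E$ and $F$ extending the bundles $E^s$, $E^u$ defined along the periodic orbits. The two geometric estimates, being uniform in the period, then place us in the framework of \cite{SaV08}: an expansive homoclinic class carrying a dominated splitting whose periodic orbits are uniformly hyperbolic. The main result of \cite{SaV08} then asserts that the bundles $E$ and $F$ must in fact be uniformly contracting and expanding, which is precisely hyperbolicity of $H(p)$.

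The main obstacle I anticipate is essentially bookkeeping rather than new dynamics: one must verify that the precise form of the $\iota$-step averaged estimates in Proposition \ref{Prop:noweak} matches the hypotheses assumed in \cite{SaV08}. A Pliss-type lemma converts the uniform $\iota$-step averages into the pointwise contraction and expansion rates on $f^{\iota}$-orbits of periodic points that are needed there, and these then extend to $H(p)$ by density. The substantive dynamical work actually lies in Proposition \ref{Prop:noweak} itself, whose proof must exclude weak periodic orbits in the class via $C^1$ perturbation techniques such as Franks' lemma, and in \cite{SaV08}, where expansiveness is the key ingredient upgrading periodic hyperbolicity to pointwise hyperbolicity on the entire class.
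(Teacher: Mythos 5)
Your proposal is correct and follows essentially the same route as the paper: the paper likewise derives Theorem A by combining Proposition \ref{Prop:noweak} (uniform hyperbolicity of the periodic orbits homoclinically related to $p$, in Ma\~n\'e's sense) with the result of Sambarino--Vieitez \cite{SaV08}, which upgrades these periodic estimates, together with expansiveness, to hyperbolicity of the whole class. The additional steps you sketch (extending the domination to $H(p)$ by density and using a Pliss-type argument) are exactly the kind of bookkeeping delegated to \cite{SaV08} in the paper.
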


\section{Proof of Proposition \ref{Prop:noweak}}
We will first prove some new generic properties and then use these
generic properties and some known results to prove Proposition
\ref{Prop:noweak}. Let us introduce some terminologies first.

For $\eta>0$ and $f\in{\rm Diff}^1(M)$, a $C^1$ curve $\gamma$ is
called \emph{$\eta$-simply periodic curve of $f$} if
\begin{itemize}
\item $\gamma$ is diffeomorphic to $[0,1]$,
and its two endpoints are hyperbolic periodic points of $f$;
\item $\gamma$ is periodic with period $\pi(\gamma)$, i.e.,
$f^{\pi(\gamma)}(\gamma)=\gamma$, and ${\rm l}(f^i(\gamma))<\eta$
for any $0\le i\le \pi(\gamma)-1$, where ${\rm l}(\gamma)$ denotes
the length of $\gamma$;

\item $\gamma$ is normally hyperbolic.
(See \cite{HPS77} for the definition of normal hyperbolicity.)
\end{itemize}

Let $p$ be a periodic point of $f$. For $\delta\in(0,1)$, we say $p$
has a \emph{$\delta$-weak eigenvalue}, if $Df^{\pi(p)}(p)$ has an
eigenvalue $\sigma$ such that $(1-\delta)^{\pi(p)}<
|\sigma|<(1+\delta)^{\pi(p)}$.

The following lemma gives three generic properties, which says
(roughly) that if an arbitrary small perturbation of $f$ has some
``stable'' property, then ($C^1$ generic) $f$ itself has this
property.
\begin{Lemma}\label{Lem:newgeneric}
For $C^1$ generic $f$ and any hyperbolic periodic point $p$ of $f$,
\begin{enumerate}
\item for any $\eta>0$,
if for any $C^1$ neighborhood ${\cal U}$ of $f$, some $g\in{\cal U}$
has an $\eta$-simply periodic curve $\gamma$, such that the two
endpoints of $\gamma$ are homoclinically related with $p_g$, then
$f$ has an $2\eta$-simply periodic curve $\alpha$ such that the two
endpoints of $\alpha$ are homoclinically related with $p$.

\item for any $\delta>0$,
if for any $C^1$ neighborhood ${\cal U}$ of $f$, some $g\in{\cal U}$
has a periodic point $q\sim_g p_g$ with $\delta$-weak eigenvalue,
then $f$ has a periodic point $q'\sim_f p$ with $2\delta$-weak
eigenvalue.

\item for any $\delta>0$,
if for any $C^1$ neighborhood ${\cal U}$ of $f$, some $g\in{\cal U}$
has a periodic point $q\sim_g p_g$ with $\delta$-weak eigenvalue and
every eigenvalue of $q$ is real, then $f$ has a periodic point
$q'\sim_f p$ with $2\delta$-weak eigenvalue and every eigenvalue of
$q'$ is real.

\end{enumerate}
\end{Lemma}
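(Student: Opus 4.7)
The plan is a standard $C^1$-generic Baire category argument, in which each of the three properties is parametrized over a countable family of $C^1$-open sets and a generic $f$ is placed in each of those sets or in the interior of its complement. Fix a countable basis $\{V_n\}$ for the topology of $M$ and a countable dense subset $D\subset(0,\infty)$ (say $\mathbb{Q}_{>0}$). For each $(V,m,\eta)\in\{V_n\}\times\mathbb{N}\times D$, let $\mathcal{O}^{(1)}_{V,m,\eta}$ be the set of $g\in\mathrm{Diff}^1(M)$ that possess a hyperbolic periodic point $p_0\in V$ of minimal period $m$ together with an $\eta$-simply periodic curve of $g$ whose two endpoints are homoclinically related with $p_0$. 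Define $\mathcal{O}^{(2)}_{V,m,\delta}$ and $\mathcal{O}^{(3)}_{V,m,\delta}$ analogously, in which the second clause is replaced by ``there is a hyperbolic periodic point $q\sim_g p_0$ with a $\delta$-weak eigenvalue'' (item~2) and the same clause together with ``the spectrum of $Dg^{\pi(q)}(q)$ is simple and real'' (item~3). Each such set is $C^1$-open, because hyperbolicity, transverse homoclinic intersection, normal hyperbolicity, the strict length bound, the strict $\delta$-weak inequality, and simple real spectrum are $C^1$-open properties that propagate along continuations. The desired residual $\mathcal{R}$ is obtained by intersecting the open-dense sets $\mathcal{O}^{(i)}_{V,m,*}\cup\mathrm{int}(\mathrm{Diff}^1(M)\setminus\mathcal{O}^{(i)}_{V,m,*})$ over all indices, together with the Kupka--Smale residual.

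Next, let $f\in\mathcal{R}$ and let $p$ be a hyperbolic periodic point of $f$ of minimal period $m_0$. Hyperbolicity together with Kupka--Smale supplies a $C^1$-neighborhood $\mathcal{U}_0\ni f$ and a basic open $V\ni p$ such that for every $g\in\mathcal{U}_0$ the continuation $p_g\in V$ is the unique hyperbolic periodic point of $g$ in $V$ of minimal period $m_0$. For item~(1) with hypothesis parameter $\eta>0$, choose $\eta_1\in D\cap(\eta,2\eta)$. Any $g\in\mathcal{U}_0$ witnessing the hypothesis satisfies $g\in\mathcal{O}^{(1)}_{V,m_0,\eta_1}$ (since $\eta<\eta_1$), so $f$ lies in the $C^1$-closure of $\mathcal{O}^{(1)}_{V,m_0,\eta_1}$. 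Membership in $\mathcal{R}$ then forces $f\in\mathcal{O}^{(1)}_{V,m_0,\eta_1}$, and the unique witnessing period-$m_0$ hyperbolic periodic point of $f$ in $V$ must be $p$. This produces an $\eta_1$-simply, hence $2\eta$-simply, periodic curve of $f$ with endpoints homoclinically related with $p$. Item~(2) follows by the identical template with $\delta_1\in D\cap(\delta,2\delta)$, using the monotonicity of the $\delta$-weak condition in $\delta$.

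The main obstacle is item~(3), because ``every eigenvalue real'' is not $C^1$-open on the full space of matrices: a multiple real eigenvalue can bifurcate into a complex conjugate pair. The remedy is to strengthen ``real'' to ``simple and real,'' which is $C^1$-open, since simple real eigenvalues of a real matrix cannot leave the real axis without first colliding. Kupka--Smale ensures that $f$ itself has only simple eigenvalues along its hyperbolic periodic orbits, and a Franks-type perturbation allows one to assume that the $g$ witnessing the hypothesis has $q$ with simple real spectrum: any multiplicities at $q$ can be split into distinct real eigenvalues via an arbitrarily small $C^1$ perturbation while keeping $q$ hyperbolic, $\delta_1$-weak, and homoclinically related with $p_g$. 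With this reduction, the Baire argument from item~(2) applies and yields $q'\sim_f p$ with $2\delta$-weak eigenvalue and real spectrum.
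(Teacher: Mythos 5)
Your proof is correct and follows essentially the same route as the paper's: encode each ``persistent configuration attached to $p$'' by countably many $C^1$-open sets and put the generic $f$ in each such set or in the interior of its complement, then identify the witnessing periodic point with $p$ by a local uniqueness argument (you localize via a basic open set of $M$ together with the minimal period, the paper via a basis of the hyperspace of compact sets --- an immaterial difference). The one substantive divergence is item 3, where you correctly observe that ``all eigenvalues real'' is not $C^1$-open at a repeated eigenvalue and repair this by strengthening to simple real spectrum after a preliminary Franks/Gourmelon-type perturbation; the paper's proof of item 3 simply declares the analogous set open without addressing this point, so your version is the more careful one.
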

\begin{proof}
Let ${\cal C}$ be the space of all compact subsets of $M$, endowed
with the Hausdorff distance. Then ${\cal C}$ is a compact separable
metric space. Let
$\mathscr{V}_1,\mathscr{V}_2,\cdots,\mathscr{V}_n,\cdots,$ be a
countable base of ${\cal C}$.

We prove item 1 first. We would like to introduce the terminologies
of two types of compact invariant sets so that item 2 and 3 can be
proved similarly. For any $\eta>0$, we call hyperbolic periodic
orbits \emph{type (I)}, and orbits of $\eta$-simply periodic curves
\emph{type (II$_\eta$)}. Both these two types of compact invariant
sets have continuations (\cite{HPS77}): if $\Lambda$ is a compact
invariant set of $f$ of type (I) or (II$_\eta$), then there is a
neighborhood ${\cal U}$ of $f$, a neighborhood $\mathscr{V}$ of
$\Lambda$ in ${\cal C}$, such that for any $g\in{\cal U}$, $\Lambda$
has a unique continuation $\Lambda_g\in\mathscr{V}$ of type (I) or
(II$_\eta$), and moreover, $\Lambda_g\to\Lambda$ as $g\to f$. We say
that a type (I) set $\Lambda^1$ and a type (II$_\eta$) set
$\Lambda^2$ have a relation $\leftrightsquigarrow_f$, if $\Lambda^1$
is homoclinically related with the two endpoints of $\Lambda^2$,
denoted by $\Lambda^1\leftrightsquigarrow_f\Lambda^2$. This relation
is stable: If $\Lambda^1\leftrightsquigarrow_f\Lambda^2$, then there
is a neighborhood ${\cal U}$ of $f$, such that for any $g\in{\cal
U}$, we have $\Lambda^1_g\leftrightsquigarrow_g\Lambda^2_g$.

Let ${\cal H}_{n}(\eta)$ be the set of $C^1$ diffeomorphisms $f$,
such that $f$ has a type (I) set $\Lambda^1\in\mathscr{V}_n$ and a
type (II$_\eta$) set $\Lambda^2$ verifying that
$\Lambda^1\leftrightsquigarrow_f\Lambda^2$. From the stability of
the relation $\leftrightsquigarrow$, ${\cal H}_{n}(\eta)$ is open.
Let ${\cal N}_{n}(\eta)={\rm Diff}^1(M)-\overline{{\cal
H}_{n}(\eta)}$.

Since ${\cal H}_{n}(\eta)\cup{\cal N}_{n}(\eta)$ is open and dense
in ${\rm Diff}^1(M)$ from their definitions, ${\cal
R(\eta)}=\cap_{n\in\mathbb{N}}({\cal H}_{n}(\eta)\cup{\cal
N}_{n}(\eta))$ is residual. And let ${\cal R}=\cap_{r\in\mathbb{Q},
r>0}{\cal R}(r)$, which is also a residual subset. We will prove
that if $f\in{\cal R}$, $f$ has the properties in item 1.

For any $\eta>0$, take $r\in\mathbb{Q}$ such that $\eta<r<2\eta$.
For any hyperbolic periodic point $p$ of $f$, take a neighborhood
$\mathscr{V}_n$ of ${\rm Orb}_f(p)$ in ${\cal C}$, such that ${\rm
Orb}_f(p)$ is the unique compact invariant set belonging to
$\mathscr{V}_n$. Since $\eta<r$, according to the assumption of item
1, for any $C^1$ neighborhood ${\cal U}$ of $f$, some $g\in{\cal U}$
has a $r$-simply periodic curve $\gamma$, such that the two
endpoints of $\gamma$ are homoclinically related with $p_g$. If the
neighborhood ${\cal U}$ of $f$ is small enough, the orbit of the
continuation $p_g$ is contained in $\mathscr{V}_n$ for any
$g\in{\cal U}$. This implies that $f\not\in{\cal N}_n(r)$ and hence
$f\in{\cal H}_n(r)\subset{\cal H}_n(2\eta)$, which means that $f$
has a periodic orbit $\mathscr{O}\in\mathscr{V}_n$ and a
$2\eta$-simply periodic curve, such that its endpoints are
homoclinically related with $\mathscr{O}$. By the choice of
$\mathscr{V}_n$, this periodic orbit $\mathscr{O}$ is ${\rm
Orb}_f(p)$. This finishes the proof of item 1.

We can prove item 2 (and item 3) similarly by defining the type
(II$_\delta$) sets to be periodic orbits homoclinically related with
$p$ which have $\delta$-weak eigenvalue (and every eigenvalue is
real).
\end{proof}

\begin{Remark}\label{Rem: noweak}
According to item 2 of Lemma \ref{Lem:newgeneric}, for $C^1$ generic
$f$ and any $\delta>0$, for any hyperbolic periodic point $p$ of
$f$, if every periodic point $q\sim_f p$ has no $2\delta$-weak
eigenvalue, then there is a neighborhood $\cal{U}$, such that for
any $g\in\cal{U}$, any periodic point $q\sim_g p_g$ has no
$\delta$-weak eigenvalue.
\end{Remark}
\begin{Lemma}\label{Lem:realweak}
For $C^1$ generic $f$, for any hyperbolic periodic point $p$ of $f$,
if for any $\delta>0$, $f$ has a periodic point $q\sim_f p$ with
$\delta$-weak eigenvalue, then $f$ has a periodic point $p_1\sim_f
p$ with $\delta$-weak eigenvalue, whose eigenvalues are all real.
\end{Lemma}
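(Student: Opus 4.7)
The plan is to reduce to item 3 of Lemma \ref{Lem:newgeneric} applied with parameter $\delta/2$: it suffices to produce, in every $C^1$ neighborhood $\mathcal{U}$ of $f$, some $g \in \mathcal{U}$ admitting a periodic point $q \sim_g p_g$ whose eigenvalues are all real and which has a $(\delta/2)$-weak eigenvalue. Lemma \ref{Lem:newgeneric}(3) will then hand us a periodic point $p_1 \sim_f p$ of $f$ with $\delta$-weak, all-real eigenvalue, as required.

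First I will refine the residual set to include the Kupka--Smale property, so every periodic orbit of $f$ is hyperbolic, and for each fixed $N$ the finitely many periodic orbits of period at most $N$ have eigenvalue moduli bounded away from $1$. Consequently, for every $N$ there is some $\delta_0(N) > 0$ such that any periodic point of $f$ with a $\delta_0(N)$-weak eigenvalue has period exceeding $N$. Now fix $\mathcal{U}$ and a Franks' Lemma threshold $\epsilon'$ for $\mathcal{U}$, choose $N_0$ so large that a rotation of angle at most $\pi/2$ distributed evenly across $N_0$ steps, summed over at most $d/2$ complex eigenvalue pairs, stays below $\epsilon'$ at every step, and pick $\delta' < \min\{\delta/2, \delta_0(N_0)\}$. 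By hypothesis $f$ has a periodic point $q \sim_f p$ with $\delta'$-weak eigenvalue, and by the choice of $\delta'$ its period satisfies $n := \pi(q) > N_0$.

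If $q$ already has all real eigenvalues, take $g = f$. Otherwise, for each complex conjugate pair $re^{\pm i\theta}$ of $Df^n_q$, let $V \subset T_q M$ be the corresponding $Df^n_q$-invariant $2$-plane, and transport it as $V_j = Df^j V$ along the orbit. I will replace $Df_{f^j(q)}$ by $L_j := Df_{f^j(q)} \circ \tilde R_j$, where $\tilde R_j$ rotates $V_j$ by $-\theta/n$ (in whichever direction minimizes the total rotation, so that the angle is at most $\pi/2$) and is the identity on the $Df^n_q$-invariant complement of $V_j$; by the choice of $N_0$, $\|L_j - Df_{f^j(q)}\| < \epsilon'$ even after superimposing the analogous rotations for every complex pair. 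A direct computation, using that the rotations transported along the orbit all lie in the same plane $V$ and therefore commute, shows $L_{n-1} \cdots L_0|_V = r \cdot \mathrm{Id}$, so the complex pair is replaced by the real double eigenvalue $r$, while the invariant complement and all other eigenvalues are untouched and every eigenvalue modulus is preserved. Franks' Lemma then yields $g \in \mathcal{U}$ agreeing with $f$ on $\mathrm{Orb}_f(q)$, for which $q$ is still a periodic point with all-real, $\delta'$-weak (hence $(\delta/2)$-weak) eigenvalue; transverse homoclinic intersections being robust, $q \sim_g p_g$.

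The main obstacle is ensuring the period $n$ is large enough for the distributed rotation to fit within $\epsilon'$ at each step. This is exactly what the Kupka--Smale reduction supplies: for very small $\delta'$, no periodic orbit of bounded period can be $\delta'$-weak, because its finitely many eigenvalue moduli are bounded away from $1$. Without this step, one could be forced to rotate by an angle close to $\pi/2$ on a short orbit, which cannot be realized as a $C^1$-small perturbation.
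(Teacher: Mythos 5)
Your overall strategy --- produce, in every $C^1$ neighborhood of $f$, some $g$ with an all-real, $(\delta/2)$-weak periodic point homoclinically related to $p_g$, then invoke item 3 of Lemma \ref{Lem:newgeneric} --- is exactly the paper's, and both your $\delta/2$ bookkeeping and the Kupka--Smale reduction forcing $\pi(q)$ to be large are sound. The gap is in the perturbation step, where the paper simply cites \cite[Lemma 4.16]{BDP03} and you argue directly. The ``direct computation'' that $L_{n-1}\cdots L_0|_V=r\cdot\mathrm{Id}$ is not valid: writing $A_j=Df_{f^j(q)}$, the product is $A_{n-1}\tilde R_{n-1}A_{n-2}\tilde R_{n-2}\cdots A_0\tilde R_0$, and to telescope it into $Df^n_q$ composed with a single rotation by $-\theta$ you would need each conjugate $A_j^{-1}\tilde R_{j+1}A_j$ to again be a metric rotation of $V_j$; since $A_j|_{V_j}\colon V_j\to V_{j+1}$ is an arbitrary isomorphism, that conjugate is an elliptic map which is in general not a rotation, and these conjugates do not commute with one another. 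Equivalently, the rotation that actually cancels the angle $\theta$ must be taken in a basis diagonalizing $Df^n_q|_V$, and its operator distance from the identity is governed by the condition number of that basis change, which is not controlled. Turning complex eigenvalues real by an $\varepsilon$-perturbation of a long product of matrices is genuinely nontrivial; the relevant perturbation lemmas in \cite{BDP03} use an intermediate-value argument on the projectivized cocycle (for some $|t|$ of order $1/n$ the perturbed product $(R_tA_{n-1})\cdots(R_tA_0)$ acquires a real eigendirection), not a telescoping identity.

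A second, independent gap: ``transverse homoclinic intersections being robust, $q\sim_g p_g$'' does not close the argument, because that robustness holds only for perturbations small relative to the fixed pair $(p,q)$, whereas your threshold $\epsilon'$ is fixed \emph{before} $q$ is chosen, and the plain Franks lemma gives no control on the invariant manifolds of $q$ once the derivative along its orbit is altered. This is precisely why the paper invokes Gourmelon's refinement \cite[Theorem 2.1]{Gou08} in the proof of Proposition \ref{Prop:noweak}, and why \cite{BDP03} instead produces a \emph{new} periodic point homoclinically related to $q_g$ via transitions. Your argument would be repaired by replacing the rotation computation with the citation of \cite[Lemma 4.16]{BDP03} (as the paper does), or by combining a correct matrix-perturbation lemma with Gourmelon's Franks lemma.
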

\begin{proof}
Assume that  for some periodic point $q\sim_f p$ with $\delta$-weak
eigenvalue, $Df^{\pi(q)}(q)$ has some complex eigenvalues. As in the
proof of \cite[Lemma 4.16]{BDP03}, an arbitrarily small perturbation
$g$ of $f$ has a periodic point $p_1\sim_g q_g$ with $\delta$-weak
eigenvalue and all eigenvalues of $p_1$ are real. Since $p_1\sim_g
q_g$ and $q_g\sim_g p_g$ imply that $p_1\sim_g p_g$, according to
item 3 of Lemma \ref{Lem:newgeneric}, $C^1$ generic $f$ has this
property itself.
\end{proof}

\begin{proof}[\bf Proof of Proposition \ref{Prop:noweak}]
For $C^1$ generic $f$, assume that $H(p)$ is an expansive homoclinic
class. We first claim that there is $\delta_0>0$, such that for any
periodic point $q\sim p$, $q$ has no $\delta_0$-weak eigenvalue.
Otherwise, for any $\delta>0$, $H(p)$ contains a periodic point
$q_\delta\sim_f p$ with $\delta$-weak eigenvalue. Then by Lemma
\ref{Lem:realweak}, $H(p)$ contains a periodic point
$q_\delta'\sim_f p$ with $\delta$-weak eigenvalue, and whose
eigenvalues are real. From \cite[Section 4]{SaV06}, for any
$\eta>0$, for any $C^1$ neighborhood ${\cal U}$ of $f$, some
$g\in{\cal U}$ has an $\eta$-simply periodic curve $\gamma$, whose
endpoints are homoclinically related with $p_g$. By item 1 of Lemma
\ref{Lem:newgeneric}, for any $\eta>0$, $f$ itself has a
$2\eta$-simply periodic curve $\alpha$, such that the endpoints of
$\alpha$ are homoclinically related with $p$. Then all iterates of
the two endpoints of $\alpha$ have distance $<2\eta$, which
contradicts the expansiveness of $H(p)$.

According to Remark \ref{Rem: noweak}, (for $C^1$ generic $f$),
there is a $C^1$ neighborhood $\cal U$ of $f$, such that for any
$g\in{\cal U}$, any periodic point $q\sim_g p_g$ has no
$\delta_0/2$-weak eigenvalue.

Gourmelon proved (\cite[Theorem 2.1]{Gou08}) an extension of Franks
Lemma (\cite{Fra71,Man82}), which preserves the (un)stable
manifolds: For any neighborhood ${\cal U}$ of $f$, there is
$\varepsilon>0$, such that for any hyperbolic periodic point $q$ of
$f$, for any sequence of linear isomorphisms $\{L_i: T_{f^iq}M\to
T_{f^{i+1}q}M\}_{i=0}^{\pi(q)-1}$ verifying
$\|Df(f^i(q))-L_i\|<\varepsilon$ for $0\le i\le \pi(q)-1$, there
exists $\delta_1>0$, for any $\delta_2\in (0, \delta_1]$, there
exists $g\in\cal U$, such that $f^i(q)=g^i(q)$, $Dg(f^i(q))=L_i$,
and $g=f$ outside of $B_{\delta_2}({\rm Orb}_f(q))$; moreover, if
$y\in W^s(f^iq)-B_{\delta_2}(f^iq)$ for some $0\le i\le \pi(q)-1$,
and $f^ky \in B_{\delta_2}(f^iq)$ for some $k>0$ implies $f^k y\in
W^s_{\delta_2}(f^iq)$, then $y\in W^s(g^iq, g)$, where
$$
W^s_{\delta_2}(x)=\{z\in M| \forall n\ge 0,\ d(f^{n}x,
f^{n}z)\le\delta_2\}
$$
is the local stable manifold of $x$ with size $\delta_2$ with
respect to $f$ and $W^s(x,g)$ is the stable manifold of $x$ with
respect to $g$. Similar conclusion also holds for the unstable
manifolds. So, we can give a perturbation of the derivatives along a
periodic orbit which preserves the homoclinic relation
simultaneously.

Since there is a $C^1$ neighborhood $\cal U$ of $f$, such that for
any $g\in{\cal U}$, any periodic point $q\sim_g p_g$ has no
$\delta_0/2$-weak eigenvalue, according to the extension of Franks'
lemma described above,
$$\{Df(q), Df(f(q)),\cdots,Df(f^{\pi(q)-1}(q)):q\sim_f p\}$$
is a uniformly hyperbolic family of periodic sequences of
isomorphisms of ${\mathbb R}^d$ (see \cite[Page524-525]{Man82} for
more details). By \cite[Lemma II.3]{Man82}, there are three
constants $\iota\in\mathbb N$, $K\ge 1$, and $\lambda\in(0,1)$, such
that for any periodic point $q\sim_f p$ with period $\pi(q)>\iota$,
for any $x\in{\rm Orb}_f(q)$, we have
\begin{eqnarray*}
\prod_{j=0}^{[\pi(q)/\iota]-1}\|Df^{\iota}|_{E^s(f^{j\iota}(x))}\|
\le K\lambda^{[\pi(q)/\iota]}, \\
\prod_{j=0}^{[\pi(q)/\iota]-1}\|Df^{-\iota}|_{E^u(f^{j\iota}(x))}\|
\le K\lambda^{[\pi(q)/\iota]},\\[.1cm]
\|Df^\iota|_{E^s(x)}\|\cdot\|Df^{-\iota}|_{E^u(f^{\iota}(x))}\|\le\lambda.
\end{eqnarray*}
\end{proof}

\begin{Acknowledgments}
We would like to thank Prof. L. Wen for useful discussions. Remark
\ref{Rem: noweak} was pointed out to us by Christian Bonatti, for
whom we should thank a lot. We also want to thank Martin Sambarino,
who kindly explained his work with J. Vieitez to us at Trieste. This
work is supported by NSFC (10531010) and MOST (2006CB805903).
\end{Acknowledgments}

\end{document}